\theoremstyle{theorem}
\newtheorem{thm}{Theorem}[section]
\newtheorem{prop}[thm]{Proposition}
\theoremstyle{remark}
\newtheorem{rem}[thm]{Remark}
\theoremstyle{definition}
\newcommand{\A}{\mathcal{A}}
\newcommand{\Q}{\mathbb{Q}}
\newcommand{\C}{\mathbb{C}}
\newcommand{\sgn}{\mathrm{sgn}}
\newcommand{\s}{\mathscr{S}}
\renewcommand{\leq}{\leqslant}
\renewcommand{\geq}{\geqslant}
\title{Purity, formality, and arrangement complements}
\author{Cl\'{e}ment Dupont}
\address{Max-Planck-Institut f\"{u}r Mathematik\\Vivatsgasse, 7 \\53111 Bonn, Germany}
\email{cdupont@mpim-bonn.mpg.de}
\keywords{formality, mixed Hodge theory, toric arrangements.}
\begin{document}
\maketitle

\begin{abstract}
We prove a \enquote{purity implies formality} statement in the context of the rational homotopy theory of smooth complex algebraic varieties, and apply it to complements of hypersurface arrangements. In particular, we prove that the complement of a toric arrangement is formal. This is analogous to the classical formality theorem for complements of hyperplane arrangements, due to Brieskorn, and generalizes a theorem of De Concini and Procesi.
\end{abstract}

\section{Introduction}
	
	Rational homotopy theory, introduced by Quillen and Sullivan~\cite{quillenrational,sullivaninfinitesimal}, studies topological spaces via their \textit{rational models}, which are commutative differential graded algebras. The topological spaces which have a rational model with zero differential are called \textit{formal}, and constitute a particularly nice family. There is a partial notion of formality, called~$r$-formality, for~$r\geq 0$ an integer. The larger the index~$r$, the stronger the notion of~$r$-formality, with classical formality corresponding to~$r=\infty$.
	
	Let us restrict our attention to smooth complex algebraic varieties. Deligne defined~\cite{delignehodge2} a refined algebraic structure, namely a \textit{mixed Hodge structure}, on the rational cohomology groups of any smooth variety. A mixed Hodge structure consists in particular of a \textit{weight filtration}. In the smooth and compact case, this filtration is concentrated in degree $k$ on the ~$k$-th cohomology group; we say that we get a pure Hodge structure of weight~$k$ and we recover classical Hodge theory. In the general smooth case, the ~$k$-th cohomology group has weights that range from~$k$ to~$2k$. If~$U$ is a smooth variety, two extreme situations are of particular interest: when~$H^k(U)$ is pure of weight~$k$ (as in the compact case) and when~$H^k(U)$ is pure of weight~$2k$ (as far away from the compact case as possible). Our first theorem is a \enquote{purity implies formality} result, where we take advantage of these two situations to prove (partial) formality.

	\begin{thm}[see Theorem~\ref{thmhodgepartialformality}]\label{thmhodgepartialformalityintro}
	Let~$U$ be a smooth variety,~$r\geq 0$ be an integer, or~$r=\infty$, and assume that one of the following conditions is satisfied:
	\begin{enumerate}
	\item for every integer~$k\leq r+1$,~$H^k(U)$ is pure of weight~$k$;
	\item for every integer~$k\leq r$,~$H^k(U)$ is pure of weight~$2k$.
	\end{enumerate}
	Then~$U$ is~$r$-formal.
	\end{thm}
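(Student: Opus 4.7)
The plan is to exploit a functorial mixed Hodge structure on a rational CDGA model of $U$, provided by Morgan (in the smooth case) and Navarro Aznar. Specifically, let $(A,d)$ be a CDGA over $\Q$ quasi-isomorphic to the de Rham algebra of $U$, equipped with a multiplicative, $d$-preserving increasing weight filtration $W$ such that the induced filtration on $H^*(A) = H^*(U;\Q)$ is Deligne's weight filtration. A fundamental property is strictness of $W$ with respect to $d$: $d(W_n A^{k-1}) = dA^{k-1} \cap W_n A^k$. In each case, the strategy is to produce a CDGA with zero differential together with a zig-zag of CDGA quasi-isomorphisms to $A$ in cohomological degrees $\leq r+1$, thus establishing $r$-formality of $A$, hence of $U$.

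For case (2), assuming the model is normalised so that weights in $A^p$ are bounded by $2p$ (as for the logarithmic de Rham model on a smooth compactification), define $I \subseteq A$ by $I^k := W_{2k-1} A^k$. Multiplicativity yields $A^p \cdot I^q \subseteq W_{2(p+q)-1} A^{p+q} = I^{p+q}$, so $I$ is a dg-ideal. The quotient $C := A/I$ then has $C^k = \mathrm{Gr}_{2k}^W A^k$ and, crucially, zero differential: for $a \in A^k$ one has $da \in W_{2k} A^{k+1} \subseteq W_{2k+1} A^{k+1} = I^{k+1}$. Dually, for case (1), take the sub-CDGA $B \subseteq A$ with $B^k := W_k A^k$ (the \enquote{lowest weight} piece) and then the quotient $B/B'$ by the dg-ideal $B'^k := W_{k-1} A^k$, yielding a zero-differential CDGA $(B/B')^k = \mathrm{Gr}_k^W A^k$. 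The candidate zig-zags are $A \twoheadrightarrow C$ in case (2) and $A \hookleftarrow B \twoheadrightarrow B/B'$ in case (1).

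The main task, and principal obstacle, is verifying the quasi-isomorphism property of these maps. In case (2), one analyses the long exact sequence of $0 \to I \to A \to C \to 0$: a cocycle $a \in I^k$ represents a class in $W_{2k-1} H^k(A) = 0$ by purity and is thus exact in $A$, and a weight-induction using strictness and purity in adjacent degrees should promote this to a coboundary in $I$, giving $H^k(I) = 0$ in the relevant range. Case (1) is more delicate, because the cohomology of $B/B'$ equals the full graded piece $\mathrm{Gr}_k^W A^k$, which may strictly contain $\mathrm{Gr}_k^W H^k(A) = H^k(U)$; one must refine the construction, for example by observing that under the purity hypothesis the weight spectral sequence degenerates sharply enough that the diagonal piece of its $E_2$-page is a sub-CDGA with zero differential isomorphic to $H^*(U)$, and realising the needed zig-zag via a Rees-type construction. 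Throughout, the interplay between strictness and purity is what controls the weights of cochain representatives of cohomology classes.
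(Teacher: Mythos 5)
Your overall strategy (pass to the pure-weight part of a weight-aware rational model) is the same as the paper's, but both of your concrete constructions break down, and for the same reason: at the extremal weight you take the wrong one of sub/quotient. In case (2), your quotient $C=A/I$ with $C^k=\mathrm{gr}^W_{2k}A^k$ has zero differential, so $H^k(C)=\mathrm{gr}^W_{2k}A^k$, which is in general strictly larger than $H^k(U)$. Equivalently, your claimed vanishing $H^k(I)=0$ is false: $I$ contains non-exact cocycles of the form $da$ with $a\in A^{k-1}$ of top weight $2k-2$ (then $da$ is closed of weight $2k-2\leq 2k-1$, but cannot equal $db$ for $b$ of weight $\leq 2k-3$). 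Concretely, for $U=\C^*$ with the Deligne--Morgan model attached to $(\mathbb{P}^1,\{0\}\cup\{\infty\})$ one has $\mathrm{gr}^W_2M^1=H^0(\{0\})\oplus H^0(\{\infty\})\cong\Q^2$ while $H^1(\C^*)\cong\Q$, so $H^1(A)\to H^1(C)$ is not surjective and your map is not even a $1$-quasi-isomorphism. In case (1) you correctly notice the analogous defect ($\mathrm{gr}^W_kA^k$ is too big), but the proposed repair via a \enquote{Rees-type construction} is a gesture, not an argument; note also that the natural candidate there, the inclusion of the bottom-weight subcomplex, identifies with $H^\bullet(X)\hookrightarrow M^\bullet$ and already fails for $U=\C=\mathbb{P}^1-\{0\}$ in degree $2$.

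The ingredient you are missing is that the relevant model is not merely filtered but weight-\emph{bigraded}: Morgan's model splits as $M^\bullet=\bigoplus_qM^\bullet_q$ into weight-homogeneous subcomplexes with $H^k(M^\bullet_q)\cong\mathrm{gr}^W_qH^k(U)$, and the extremal pieces are one-sided in cohomological degree, namely $M^{k+1}_k=0$ and $M^{k-1}_{2k}=0$. Hence under hypothesis (1) the pure part $H^k(M^\bullet_k)$ is a \emph{cokernel} $\mathrm{coker}\left(M^{k-1}_k\to M^k_k\right)$, i.e.\ a quotient cdga of $M^\bullet$ with zero differential, while under hypothesis (2) it is a \emph{kernel} $\ker\left(M^k_{2k}\to M^{k+1}_{2k}\right)$, i.e.\ a sub-cdga with zero differential; the correct zig-zags are a surjection in case (1) and an injection in case (2), the opposite of what you set up in case (2) and a genuinely different object in case (1). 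With only a multiplicative filtration and strictness, neither $W_{2k-1}A^k$ nor $W_kA^k$ isolates the \emph{closed} top- (resp.\ bottom-) weight elements, and that is exactly where your quasi-isomorphism check collapses.
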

	
	
	The study of the rational homotopy theory of smooth varieties can be traced back to the seminal paper~\cite{delignegriffithsmorgansullivan} by Deligne, Griffiths, Morgan and Sullivan, whose main result is the formality of smooth compact algebraic varieties. This is proved using classical Hodge theory. Later, Morgan~\cite{morganalgebraictopology} used Deligne's mixed Hodge theory to introduce rational models for all smooth complex varieties. These models are the only ingredients of the proof of Theorem~\ref{thmhodgepartialformalityintro}.\\
	
	Next, we apply Theorem~\ref{thmhodgepartialformalityintro} to the special case of complements of hypersurface arrangements, as follows. Let~$X$ be a smooth (not necessarily compact) variety. A finite set~$\A=\{L_1,\ldots,L_l\}$ of smooth hypersurfaces of~$X$ is a \textit{hypersurface arrangement}~\cite{duponthypersurface} if around each point of~$X$ we can find a system of local coordinates in which each~$L_i$ is defined by a linear equation. The notion of a hypersurface arrangement generalizes that of a simple normal crossing divisor, for which the local linear equations are everywhere linearly independent. We denote by 
	$$U(\A)=X - L_1\cup\cdots\cup L_l$$
	the complement of the union of the hypersurfaces in~$\A$. A \textit{stratum} of~$\A$ is a connected component of an intersection~$\bigcap_{i\in I}L_i$ for some~$I\subset\{1,\ldots,l\}$. The Leray spectral sequence of the inclusion of~$U(\A)$ inside~$X$ explains how the weights on the cohomology of the strata control the weights on the cohomology of the complement~$U(\A)$. This allows us to prove, as a corollary of Theorem~\ref{thmhodgepartialformalityintro}, a \enquote{purity implies formality} theorem in the case of complements of hypersurface arrangements, where the purity assumption now concerns the cohomology of the strata.
	
	\begin{thm}[see Theorem~\ref{thmarrangementsformalpartial}]\label{thmarrangementsformalpartialintro}
	Let~$\A$ be a hypersurface arrangement inside a smooth variety~$X$, and~$r\geq 0$ be an integer, or~$r=\infty$. Assume that for every stratum~$S$ of~$\A$ and for every integer~$k$ such that~$\mathrm{codim}(S)+k\leq r$,~$H^k(S)$ is pure of weight~$2k$. Then~$U(\A)$ is~$r$-formal.
	\end{thm}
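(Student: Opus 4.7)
My plan is to reduce Theorem~\ref{thmarrangementsformalpartialintro} to Theorem~\ref{thmhodgepartialformalityintro} applied to $U=U(\A)$. Under hypothesis~(2) of that theorem it is enough to show that $H^k(U(\A))$ is pure of weight $2k$ for every integer $k\leq r$, so the entire proof will amount to a weight computation on the complement.

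To access these weights I would invoke the Leray-type spectral sequence of mixed Hodge structures associated to the open inclusion $U(\A)\hookrightarrow X$ developed in~\cite{duponthypersurface}. The feature I plan to use is that its $E_1$ page decomposes, over the strata of $\A$, into shifted and Tate-twisted copies of the cohomology of each stratum: a codimension-$p$ stratum~$S$ contributes a summand $H^{q-2p}(S)(-p)$ at position $E_1^{-p,q}$, and the abutment is $H^{q-p}(U(\A))$ with its weight filtration, the differentials being built from combinatorial Gysin maps.

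Now fix $n\leq r$ and examine the $E_1$-terms with $q-p=n$. For a codimension-$p$ stratum~$S$ the corresponding summand is $H^{q-2p}(S)(-p)$, and the identity $\mathrm{codim}(S)+(q-2p)=p+(q-2p)=q-p=n\leq r$ puts us in the range where the hypothesis applies: $H^{q-2p}(S)$ is pure of weight $2(q-2p)$, and the Tate twist by $-p$ makes $H^{q-2p}(S)(-p)$ pure of weight $2(q-2p)+2p=2n$. Every later page $E_r^{-p,q}$ with $q-p=n$ is a subquotient of a pure Hodge structure of weight $2n$, hence itself pure of weight $2n$. The weight filtration on $H^n(U(\A))$ is therefore concentrated in weight~$2n$, which is the desired purity statement, and Theorem~\ref{thmhodgepartialformalityintro} then yields $r$-formality.

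The main obstacle I anticipate is purely bookkeeping: matching the indexing, cohomological shifts, and Tate twists in the spectral sequence of~\cite{duponthypersurface} to the weight conventions used in Theorem~\ref{thmhodgepartialformalityintro}, and checking that the relevant differentials are morphisms of mixed Hodge structures. Once the spectral sequence is cited in the correct form, the hypothesis has been tailored so that precisely the strata cohomology groups contributing to $H^n(U(\A))$ for $n\leq r$ are forced to be pure of the right weight, and no further argument is needed.
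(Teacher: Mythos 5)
Your proposal is correct and follows essentially the same route as the paper: both feed the Leray spectral sequence of $U(\A)\hookrightarrow X$ (yours in a shifted $E_1$-indexing, the paper's in the $E_2$-form $E_2^{p,q}=\bigoplus_{S\in\s_q(\A)}H^p(S)\otimes A_S(\A)(-q)$, whose Orlik--Solomon multiplicities carry trivial weight) into the purity hypothesis to conclude that $H^k(U(\A))$ is pure of weight $2k$ for $k\leq r$, and then invoke part (2) of Theorem~\ref{thmhodgepartialformality}. The only differences are the indexing convention and your omission of the multiplicity factors $A_S(\A)$, neither of which affects the weight count.
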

	
	Finally, a toric arrangement is a particular case of a hypersurface arrangement inside a complex torus~$X=(\C^*)^n$, where each~$L_i$ is defined by a global equation of the form~$\{z_1^{k_1}\cdots z_n^{k_n}=a\}$, with~$k_1,\ldots,k_n$ integers and~$a$ a non-zero complex number. The study of the topology of the complement~$U(\A)$, for~$\A$ a toric arrangement, was started by De Concini and Procesi~\cite{deconciniprocesitoric} as an analogue of the classical theory of hyperplane arrangements. Following their program, we prove the following corollary of Theorem~\ref{thmarrangementsformalpartialintro}.
	
	\begin{thm}[see Theorem~\ref{thmtoricformal}]\label{thmtoricformalintro}
	Let~$\A$ be a toric arrangement, then the complement~$U(\A)$ is formal.
	\end{thm}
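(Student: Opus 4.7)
The plan is to deduce Theorem~\ref{thmtoricformalintro} directly from Theorem~\ref{thmarrangementsformalpartialintro} applied with $r=\infty$. Since Theorem~\ref{thmarrangementsformalpartialintro} requires a purity statement on the strata, the only real content is to check: for every stratum $S$ of the toric arrangement $\A$ and every $k\geq 0$, the cohomology $H^k(S;\Q)$ is pure of weight $2k$.

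First I would pin down the structure of the strata. Each hypersurface $L_i$ is the preimage of a point under a character $\chi_i\colon (\C^*)^n\to\C^*$ of the form $z_1^{k_1}\cdots z_n^{k_n}$, hence is a translate of $\ker(\chi_i)$. This kernel is a closed algebraic subgroup of $(\C^*)^n$ and may well be disconnected (for instance $\{z^2=1\}$ has two components), but each of its connected components is isomorphic to $(\C^*)^{n-1}$. Intersecting a subfamily $\{L_i\}_{i\in I}$ and then picking a connected component yields a translate of the identity component of $\bigcap_{i\in I}\ker(\chi_i)$, which is itself an algebraic subtorus. Consequently, every stratum $S$ of $\A$ is isomorphic, as a complex algebraic variety, to $(\C^*)^d$ for some $d\geq 0$.

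Next I would invoke the classical Hodge-theoretic computation for a complex torus. Since $H^1(\C^*;\Q)\cong \Q(-1)$ is pure of weight $2$, the Künneth formula gives $H^k((\C^*)^d;\Q)\cong \Lambda^k H^1((\C^*)^d;\Q)$, which is pure of weight $2k$ for every $k$. Combining this with the previous step verifies the hypothesis of Theorem~\ref{thmarrangementsformalpartialintro} for all $k$ and $r=\infty$, and so $U(\A)$ is formal.

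The only step that requires care is the first one, namely the identification of strata with algebraic tori; the subtlety is that a single hypersurface $L_i$ need not be connected, so one has to argue at the level of identity components of intersections of kernels of characters rather than on $L_i$ itself. Once this is set up, both the weight computation and the appeal to Theorem~\ref{thmarrangementsformalpartialintro} are formal.
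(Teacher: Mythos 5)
Your proposal is correct and follows essentially the same route as the paper: identify every stratum with a torus $(\C^*)^d$, deduce that $H^k((\C^*)^d)$ is pure of weight $2k$ via the K\"{u}nneth formula from $H^1(\C^*)\cong\Q(-1)$, and apply Theorem~\ref{thmarrangementsformalpartialintro} with $r=\infty$. The one point the paper addresses that you pass over silently is the verification that a toric arrangement satisfies the local linearizability condition defining a hypersurface arrangement (a hypothesis of the theorem you invoke), which the paper disposes of in one line using the exponential cover $\exp:\C^n\rightarrow(\C^*)^n$; conversely, you give more detail than the paper on why the strata are tori, which is a worthwhile addition since a single $L_i$ or an intersection $\bigcap_{i\in I}L_i$ need not be connected.
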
		
	
	This theorem is analogous to the classical formality theorem for complements of hyperplane arrangements, due to Brieskorn~\cite{brieskorn}, and which is also (Theorem~\ref{thmformalhyp}) a corollary of Theorem~\ref{thmarrangementsformalpartialintro}.
	
	De Concini and Procesi~\cite{deconciniprocesitoric} already proved a special case of Theorem~\ref{thmtoricformalintro}, namely the case of \emph{unimodular} toric arrangements; this was later generalized to \emph{deletion-restriction type} toric arrangements by Deshpande and Sutar~\cite{deshpandesutar}. In both cases, formality is proved by exhibiting an algebra of closed differential forms that maps bijectively to the cohomology of the complement of the arrangement, exactly as in the case of hyperplane arrangements. Our method is very different, and does not require any a priori understanding of the cohomology algebra. \\
	
	In the present article, all cohomology groups are implicitly taken with rational coefficients. We write \enquote{cdga} for \enquote{commutative differential graded algebra}, implicitly over the rationals, where commutativity is understood in the graded sense. We write \enquote{variety} for \enquote{complex algebraic variety}.\\

	Many thanks to Christin Bibby, Yohan Brunebarbe, Filippo Callegaro, Emanuele Delucchi, Priyavrat Deshpande, Luca Moci and Sergey Yuzvinsky for stimulating discussions. Many thanks to \c{S}tefan Papadima for numerous helpful comments on rational homotopy theory, and in particular for suggesting to state the results of the present article in the context of partial formality. Finally, the comments and suggestions from the anonymous referees allowed to greatly improve the presentation.

\section{Purity implies formality}

	\subsection{Mixed Hodge structures}
	
		We refer to~\cite[2.3.8]{delignehodge2} for the precise definitions. For the needs of the present article, let us just mention that a mixed Hodge structure is given in particular by a finite-dimensional rational vector space~$H$ along with an increasing filtration
		$$\cdots\subset W_{i-1}H \subset W_i H \subset W_{i+1} H \subset \cdots$$ 
		called the \textit{weight filtration}. We denote by $\mathrm{gr}_i^WH=W_iH/W_{i-1}H$ the successive quotients. A mixed Hodge structure is \textit{pure} of weight~$w$ if~$W_{w-1}H=0$ and~$W_wH=H$, i.e. if $\mathrm{gr}_i^WH=0$ for $i\neq w$. If~$H$ is a mixed Hodge structure and~$d$ is an integer, then one can construct the \textit{Tate twist}~$H(-d)$, which is a mixed Hodge structure on the same underlying vector space~$H$ for which the weight filtration is shifted by~$2d$:~$W_iH(-d)=W_{i-2d}H$.\\
	
	In~\cite{delignehodge2}, Deligne proves the existence of a functorial mixed Hodge structure on the cohomology groups of smooth varieties. For~$X$ a smooth compact variety, this is classical Hodge theory, and the mixed Hodge structure on~$H^k(X)$ is pure of weight~$k$. For~$U$ a smooth (not necessarily compact) variety, the weight filtration in~$H^k(U)$ has the form 
		$$0=W_{k-1}H^k(U)\subset W_kH^k(U)\subset\cdots \subset W_{2k-1}H^k(U)\subset W_{2k}H^k(U)=H^k(U).$$
	In other words, the weights in~$H^k(U)$ range from~$k$ to~$2k$. A basic example is the cohomology group~$H^1(\C^*)$, which is one-dimensional and pure of weight~$2$.	
	
	\subsection{Rational models and formality}
	
		Let~$r\geq 0$ be an integer, or~$r=\infty$. Let us recall that a map~$f:A\rightarrow B$ of cdga's is a \textit{$r$-quasi-isomorphism} if the induced map on cohomology~$H^i(f):H^i(A)\rightarrow H^i(B)$ is an isomorphism for~$i\leq r$ and an injection for~$i=r+1$. A \textit{rational~$r$-model} for a topological space~$U$ is a cdga that is connected to the algebra~$A^\bullet(U)$ of piecewise polynomial forms on~$U$ by a zig-zag of~$r$-quasi-isomorphisms. We say that~$U$ is \textit{$r$-formal} if it has a rational~$r$-model with zero differential. See~\cite{macinicformality} for a discussion of this notion, and~\cite[5.4]{suciutangentcone} for an application of partial formality to the study of resonance and characteristic varieties. \\
		
		The case~$r=\infty$ is of particular interest: a~$\infty$-quasi-isomorphism is just a quasi-isomorphism, a rational~$\infty$-model is simply called a \textit{rational model} and a~$\infty$-formal topological space is simply called \textit{formal}.
	
	\subsection{The Deligne-Morgan model}
	
		If a smooth variety~$U$ is given as the complement, in a smooth compact variety, of a normal crossing divisor, then Morgan used Deligne's mixed Hodge theory to define a finite-dimensional model for~$U$ that we now describe.  \\
	
		Let~$X$ be a smooth compact variety and~$D$ be a simple normal crossing divisor in~$X$; this means that in local charts on~$X$,~$D$ looks like a union of coordinate hyperplanes, and that the irreducible components of~$D$ are smooth. We label by~$D_1,\ldots,D_s$ those irreducible components. For a subset~$I\subset \{1,\ldots,s\}$, we denote by~$D_I=\bigcap_{i\in I}D_i$ the corresponding intersection, which is a disjoint union of smooth compact varieties of codimension the cardinality of~$I$. Let us put
		$$M_q^k=\bigoplus_{|I|=q-k}H^{2k-q}(D_I)(k-q).$$
		
		Because of the Tate twist~$(k-q)$,~$M_q^k$ is a pure Hodge structure of weight~$q$. \\
	
		The following theorem summarizes results from ~\cite{delignehodge2} and ~\cite{morganalgebraictopology} on the mixed Hodge structure and the rational homotopy theory of smooth algebraic varieties.
		
		\begin{thm}\label{thmmorgan}
		\begin{enumerate}
		\item The direct sum~$M^\bullet=\bigoplus_qM^\bullet_q$ has a natural structure of a cdga, with product~$M_q^k\otimes M_{q'}^{k'}\rightarrow M_{q+q'}^{k+k'}$ and differential~$M_q^k\rightarrow M_{q}^{k+1}$.
		\item We have isomorphisms~$H^k(M^\bullet_q)\cong\mathrm{gr}^W_qH^k(X-D)$ that are compatible with the product on~$M^\bullet$ and the cup-product in cohomology.
		\item The cdga~$M^\bullet$ is a rational model for~$X-D$.
		\end{enumerate}
		\end{thm}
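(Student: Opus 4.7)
The plan is to derive all three statements from Deligne's weight filtration on the logarithmic de Rham complex, strengthened by Morgan's theorem at the level of rational models. Let $\Omega^\bullet_X(\log D)$ denote the sheaf complex of holomorphic forms on $X$ with logarithmic poles along $D$, which quasi-isomorphically computes $H^\bullet(U;\C)$. Deligne equips it with the increasing weight filtration $W_m$, locally generated as an $\mathcal{O}_X$-module by products of at most $m$ forms of the type $d\log z_i$ where $z_i$ cuts out a branch of $D$. The Poincar\'e residue then gives canonical isomorphisms of sheaf complexes
$$\mathrm{gr}^W_m \Omega^\bullet_X(\log D) \;\simeq\; (a_m)_*\,\Omega^{\bullet-m}_{\widetilde{D}^m},$$
where $\widetilde{D}^m = \bigsqcup_{|I|=m} D_I$ and $a_m$ is the structure map to $X$; the Tate twist $(-m)$ enters once one records the mixed Hodge structure, since each $d\log z_i$ carries weight two.

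For statement (2), the resulting weight spectral sequence has $E_1^{-m,\,k+m} = H^{k-m}(\widetilde{D}^m)(-m)$ and converges to $H^k(U)$. Reindexing via $q = k+m$ identifies this $E_1$ page with $M_q^k$, and the $E_1$ differential with the combinatorial differential $M_q^k \to M_q^{k+1}$. Deligne's theorem that this spectral sequence degenerates at $E_2$ then yields $H^k(M^\bullet_q) = E_2^{k-q,\,q} = \mathrm{gr}^W_q H^k(U)$. For statement (1), the wedge product on $\Omega^\bullet_X(\log D)$ satisfies $W_m \cdot W_{m'} \subset W_{m+m'}$ and so descends to a graded-commutative product on the associated graded; under the residue isomorphism, this is precisely the product $M_q^k \otimes M_{q'}^{k'} \to M_{q+q'}^{k+k'}$, computed combinatorially in terms of restrictions to deeper strata with appropriate signs. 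Compatibility with the cup product on $\mathrm{gr}^W H^\bullet(U)$ is a formal consequence of the multiplicativity of the weight spectral sequence.

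Statement (3) is the substantive ingredient, due to Morgan: it upgrades the cohomological identification to a zig-zag of rational cdga quasi-isomorphisms linking $M^\bullet$ to $A^\bullet(U)$. Morgan's strategy is to work with a \emph{mixed Hodge diagram of differential forms} whose complex component is the $C^\infty$ logarithmic de Rham complex $\mathcal{E}^\bullet_X(\log D)$ endowed with the weight filtration, and to use the degeneration at $E_2$ of the weight spectral sequence to produce filtered cdga quasi-isomorphisms connecting this filtered cdga to its weight-graded version, which is $M^\bullet\otimes\C$. The main obstacle is the descent of this zig-zag to the rational level: the rational structure on $M^\bullet$ is supplied by the Poincar\'e residue maps on the rational cohomology of the strata, and matching it with the rational part of the mixed Hodge diagram requires Morgan's full machinery of filtered rational homotopy theory. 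It is precisely degeneration at $E_2$ that rules out higher obstructions (Massey-type products) between $M^\bullet$ and $A^\bullet(U)$ and makes the weight-graded cdga a faithful record of the rational homotopy type of $U$.
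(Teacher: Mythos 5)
The paper does not actually prove this theorem: it is stated explicitly as a summary of results of Deligne~\cite{delignehodge2} and Morgan~\cite{morganalgebraictopology}, and the text that follows only records the explicit formulas for the product and differential. Your proposal is therefore not an alternative to the paper's argument but a sketch of the standard proofs from the literature. For parts (1) and (2) your outline is essentially correct: the residue isomorphism $\mathrm{gr}^W_m\Omega^\bullet_X(\log D)\simeq (a_m)_*\Omega^{\bullet-m}_{\widetilde D^m}$, the identification of the weight spectral sequence's $E_1$ page with $M^\bullet$ (your reindexing $q=k+m$ is the right one, and $d_1$ is indeed the alternating sum of Gysin maps), and Deligne's $E_2$-degeneration give exactly $H^k(M^\bullet_q)\cong\mathrm{gr}^W_qH^k(X-D)$, with multiplicativity of the filtered complex yielding the product. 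One point you gloss over: the logarithmic de Rham computation is a priori a statement over $\C$, and identifying the $\Q$-structure on the $E_1$ page with the rational cohomology of the strata requires Deligne's comparison of the weight spectral sequence with the Leray spectral sequence of $U\hookrightarrow X$ (which the paper records in Remark~\ref{remleray}).

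Part (3) is where your sketch has a genuine inaccuracy. You write that it is \enquote{precisely degeneration at $E_2$ that rules out higher obstructions} and makes $E_1$ a faithful model. That is not the mechanism: for a general filtered cdga, degeneration of the associated spectral sequence at $E_2$ does not imply that the $E_1$ page (with its $d_1$) is quasi-isomorphic to the cdga itself --- that is a formality-type assertion, and it is exactly what needs proving. Morgan's argument uses the full mixed Hodge diagram, i.e.\ the interplay of the weight filtration $W$ with the Hodge filtration $F$: the purity of the $E_1$ terms and the resulting Deligne splitting (the \enquote{principle of two types}, as in the $dd^c$-lemma argument of Deligne--Griffiths--Morgan--Sullivan) are what allow the construction of a bigraded minimal model mapping compatibly to both $M^\bullet$ and $A^\bullet(U)$, followed by descent from $\C$ to $\Q$. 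If you intend (3) as a citation of Morgan's theorem, as the paper does, this is harmless; if you intend it as a proof, the stated reason is wrong and the $F$-filtration input is missing.
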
	
		
		For the sake of completeness, we now give explicit formulas for the cdga structure on~$M^\bullet$. 
		For~$I\subset J\subset\{1,\ldots,s\}$, we denote by~$\iota_J^I:D_J\hookrightarrow D_I$ the corresponding closed immersion. For~$I,I'\subset\{1,\ldots,s\}$ such that~$I\cap I'=\varnothing$, we denote by~$\sgn(I,I')\in\{\pm 1\}$ the sign of the permutation that orders~$I\cup I'$ in increasing order.\\
		
		Let us define the \emph{product}
		$$\mu:M_q^n\otimes M_{q'}^{n'}\rightarrow M_{q+q'}^{n+n'}$$
		on elements~$x\in H^{2n-q}(D_I)$ and~$x'\in H^{2n'-q'}(D'_{I'})$. By definition,~$\mu(x\otimes x')$ is zero if~$I\cap I'\neq\varnothing$, and
		$$\mu(x\otimes x')=(-1)^{(q-n)q'}\sgn(I,I') \, (\iota^I_{I\cup I'})^*(x)\cdot (\iota^{I'}_{I\cup I'})^*(x')$$
		otherwise, where~$\iota^*$ denotes the restriction morphism associated to a closed immersion~$\iota$, and~$\alpha\cdot\alpha'$ denotes the cup-product of elements~$\alpha$ and~$\alpha'$ in cohomology. \\
		
		Let us define the \emph{differential}
		$$d:M_q^n\rightarrow M_{q}^{n+1}$$
		on an element~$x\in H^{2n-q}(D_I)$. It is given by the formula
		$$d(x)=(-1)^q\sum_{i\in I}\sgn(\{i\},I-\{i\}) \, (\iota^{I-\{i\}}_I)_*(x)$$
		where~$\iota_*$ denotes the Gysin morphism associated to a closed immersion~$\iota$.
		
		\begin{rem}\label{remleray}
		As noted in~\cite[3.1.7]{delignehodge2}, the Deligne-Morgan model~$M^\bullet$ appears as the~$E_2$ page of the Leray spectral sequence of the inclusion~$X-D\hookrightarrow X$.
		\end{rem}
		
	\subsection{Purity implies formality}
		
		\begin{thm}\label{thmhodgepartialformality}
		Let~$U$ be a smooth variety,~$r\geq 0$ be an integer, or~$r=\infty$, and assume that one of the following conditions is satisfied:
		\begin{enumerate}
		\item for every integer~$k\leq r+1$,~$H^k(U)$ is pure of weight~$k$;
		\item for every integer~$k\leq r$,~$H^k(U)$ is pure of weight~$2k$.
		\end{enumerate}
		Then~$U$ is~$r$-formal.
		\end{thm}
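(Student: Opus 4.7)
The plan is to construct, in each case, an explicit $r$-quasi-isomorphism between the Deligne--Morgan model $M^\bullet$ of Theorem~\ref{thmmorgan} and a cdga with zero differential. After invoking Hironaka's theorem to choose a smooth compactification $X \supset U$ whose complement $D = X - U$ is a simple normal crossing divisor, the model $M^\bullet$ is defined and provides a rational model for $U$ by Theorem~\ref{thmmorgan}(3). The identification $H^k(M_q^\bullet) \cong \mathrm{gr}^W_q H^k(U)$ from Theorem~\ref{thmmorgan}(2) translates the hypotheses into the vanishing of $H^k(M_q^\bullet)$ for $k \leq r+1$ and $q \neq k$ in case~(1), and for $k \leq r$ and $q \neq 2k$ in case~(2). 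A key geometric input I would exploit throughout is that $M_q^k$ vanishes unless $q/2 \leq k \leq q$, so each $M_q^\bullet$ is concentrated in a bounded range of degrees.

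In case~(1), purity forces the cohomology of each $M_q^\bullet$ to sit at the top degree $k = q$ of its support interval. I would define a cdga $N^\bullet$ by setting $N^k := H^k(M_k^\bullet) = M_k^k / d(M_k^{k-1})$ (the differential out of $M_q^q$ is automatically zero since $M_q^{q+1} = 0$), with the induced product and zero differential. The natural projection $\pi \colon M^\bullet \to N^\bullet$ sending $x \in M_q^k$ to $0$ if $q > k$ and to $[x]$ if $q = k$ is a cdga morphism: the verification for the product is immediate, and for the differential, the only nontrivial case is $q = k+1$, where $\pi(dx) = [dx] \in H^{k+1}(M_{k+1}^\bullet)$ vanishes because $dx$ is a boundary. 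Under the purity hypothesis, for $k \leq r+1$ the identity $H^k(M^\bullet) = H^k(M_k^\bullet) = N^k$ shows that $H^k(\pi)$ is an isomorphism, so $\pi$ is an $r$-quasi-isomorphism.

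In case~(2), purity forces the cohomology of each $M_q^\bullet$ to sit at the bottom degree $k = q/2$. I would define the subspace $N^k := \ker\bigl( d \colon M_{2k}^k \to M_{2k}^{k+1} \bigr)$ of $M^k$. Its elements are closed, so $N^\bullet$ inherits the zero differential; and since the product sends $M_{2k}^k \otimes M_{2k'}^{k'}$ into $M_{2(k+k')}^{k+k'}$ and preserves closedness, $N^\bullet$ is a sub-cdga of $M^\bullet$. For $k \leq r$, the inclusion $N^\bullet \hookrightarrow M^\bullet$ induces $N^k = H^k(N^\bullet) \to H^k(M^\bullet) = H^k(M_{2k}^\bullet) = N^k$, which is the identity. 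For injectivity at $k = r+1$, any boundary in $M_{2(r+1)}^{r+1}$ must come from $M_{2(r+1)}^r$, and this space vanishes because the bound $q/2 \leq k$ would force $r+1 \leq r$.

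Most of the work is careful bookkeeping with the explicit formulas from Theorem~\ref{thmmorgan}; the conceptual heart is the translation of the purity hypothesis into the statement that each $M_q^\bullet$ has cohomology concentrated at a single extreme of its support interval $[q/2, q]$, so that it is quasi-isomorphic to this extreme cohomology. The hardest step will be the analysis at the boundary degree $k = r+1$; in case~(2), this hinges on the vanishing $M_{2(r+1)}^r = 0$, which is exactly what forces injectivity on $H^{r+1}$.
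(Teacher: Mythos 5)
Your proposal is correct and essentially identical to the paper's own proof: in case (1) you form the quotient cdga $C^k=\mathrm{coker}(M_k^{k-1}\to M_k^k)$ with the surjection from $M^\bullet$, and in case (2) the sub-cdga $K^k=\ker(M_{2k}^k\to M_{2k}^{k+1})$ with the inclusion into $M^\bullet$, checking the $r$-quasi-isomorphism property via the same vanishing $M_q^k=0$ unless $q/2\leq k\leq q$ (in particular $M_k^{k+1}=0$ and $M_{2k}^{k-1}=0$). The only cosmetic omission is that the paper also invokes Nagata's embedding theorem alongside Hironaka to produce the smooth compactification with simple normal crossing boundary.
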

	
		\begin{proof}
		By Nagata's embedding theorem and Hironaka's resolution of singularities, there exists a pair~$(X,D)$ such that~$X$ is a smooth compact variety,~$D$ is a simple normal crossing divisor, and~$X- D=U$. We fix such a pair and use the corresponding model~$M^\bullet$ from Theorem~\ref{thmmorgan}.
		\begin{enumerate}
		\item By assumption and by Theorem~\ref{thmmorgan} (2) we have~$H^k(M_q^\bullet)=0$ for~$q\neq k$ and~$k\leq r+1$. We note that~$M_k^{k+1}=0$ and put, for every integer~$k$,
		$$C^k=H^k(M_k^\bullet)= \mathrm{coker}\left(M_k^{k-1}\stackrel{d}{\longrightarrow}M_k^k\right).$$
		The collection~$C^\bullet$ has a natural structure of a cdga with zero differential for which the surjection~$M^\bullet\twoheadrightarrow C^\bullet$ is a morphism of cdga's. The induced map
		$$H^i(M^\bullet)\rightarrow H^i(C^\bullet)$$
		is an isomorphism for~$i\leq r+1$, hence~$M^\bullet\twoheadrightarrow C^\bullet$ is a~$r$-quasi-isomorphism. By Theorem~\ref{thmmorgan} (3), this implies that~$C^\bullet$, with zero differential, is a rational~$r$-model for~$U$, hence the result.
		
		\item By assumption and by Theorem~\ref{thmmorgan} (2) we have~$H^k(M_q^\bullet)=0$ for~$q\neq 2k$ and~$k\leq r$. We note that~$M_{2k}^{k-1}=0$ and put, for every integer~$k$,
		$$K^k=H^k(M_{2k}^\bullet)= \mathrm{ker}\left(M_{2k}^{k}\stackrel{d}{\longrightarrow}M_{2k}^{k+1}\right).$$
		The collection~$K^\bullet$ has a natural structure of a cdga with zero differential for which the injection~$K^\bullet\hookrightarrow M^\bullet$ is a morphism of cdga's. The induced map
		$$H^i(K^\bullet)\rightarrow H^i(M^\bullet)$$
		is an injection for every~$i$, and an isomorphism for~$i\leq r$, hence~$K^\bullet\hookrightarrow M^\bullet$ is a~$r$-quasi-isomorphism. By Theorem~\ref{thmmorgan} (3), this implies that~$K^\bullet$, with zero differential, is a rational~$r$-model for~$U$, hence the result.	
	
		\end{enumerate}
		\end{proof}
		
		\begin{rem}
		In Theorem~\ref{thmhodgepartialformality} (1), one cannot replace the inequality~$k\leq r+1$ by~$k\leq r$. Indeed, in~\cite[Example 10.1]{dimcapapadimasuciu}, it is shown that the configuration space of~$n\geq 3$ ordered points on an elliptic curve is not~$1$-formal; on the other hand, it is easy to show that its first cohomology group is pure of weight~$1$.
		\end{rem}
		
		\begin{rem}
		In the context of rational homotopy theory for (not necessarily smooth) projective varieties, Chataur and Cirici used a similar argument to prove~\cite[Theorem 3.3]{chataurcirici} a \enquote{purity implies formality} result analogous to Theorem~\ref{thmhodgepartialformality} (1). 
		\end{rem}
		
		For the sake of clarity, let us state separately the case~$r=\infty$ of Theorem~\ref{thmhodgepartialformality}.
			
		\begin{thm}\label{thmhodgeformality}
		Let~$U$ be a smooth variety, and assume that one of the following conditions is satisfied:
		\begin{enumerate}
		\item for every integer~$k$,~$H^k(U)$ is pure of weight~$k$;
		\item for every integer~$k$,~$H^k(U)$ is pure of weight~$2k$.
		\end{enumerate}
		Then~$U$ is formal.
		\end{thm}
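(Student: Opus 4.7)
The plan is to deduce Theorem~\ref{thmhodgeformality} as the specialization $r=\infty$ of Theorem~\ref{thmhodgepartialformality}. Under this convention, the condition ``$k\leq r+1$'' (respectively ``$k\leq r$'') reads simply ``every integer $k$'', so hypotheses (1) and (2) of the two theorems coincide. By the definitions recalled in Section~2.2, an $\infty$-quasi-isomorphism is a quasi-isomorphism, a rational $\infty$-model is a rational model, and $\infty$-formality is formality. Therefore the conclusion of Theorem~\ref{thmhodgepartialformality} applied with $r=\infty$ is exactly the desired conclusion that $U$ is formal, and no additional argument is required.

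If one wants to unwind the proof rather than cite it, it amounts to running the argument of Theorem~\ref{thmhodgepartialformality} verbatim, now with $r=\infty$: choose a smooth compactification $(X,D)$ of $U$ by a simple normal crossing divisor (Nagata plus Hironaka), form the Deligne--Morgan cdga $M^\bullet$ of Theorem~\ref{thmmorgan}, and exhibit a zero-differential cdga quasi-isomorphic to it. Under hypothesis (1), purity forces $H^k(M_q^\bullet)=0$ for $q\neq k$ for \emph{every} $k$, so the quotient map $M^\bullet\twoheadrightarrow C^\bullet$ onto $C^k=\mathrm{coker}(M_k^{k-1}\to M_k^k)$ is a bona fide quasi-isomorphism of cdga's. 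Under hypothesis (2), purity forces $H^k(M_q^\bullet)=0$ for $q\neq 2k$ for every $k$, so the inclusion $K^\bullet\hookrightarrow M^\bullet$ with $K^k=\ker(M_{2k}^k\to M_{2k}^{k+1})$ is a quasi-isomorphism of cdga's. In either case, $U$ acquires a rational model with zero differential.

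There is no real obstacle here: the substantive content is packaged inside Theorem~\ref{thmmorgan} (existence of the bigraded cdga $M^\bullet$, compatibility with the weight filtration on $H^\bullet(U)$, and the fact that $M^\bullet$ is a rational model for $U$), and the passage from the partial to the full statement is purely formal, amounting to letting $r\to\infty$ in each of the inequalities that bound the range of $k$.
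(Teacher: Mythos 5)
Your proposal is correct and matches the paper exactly: the paper introduces Theorem~\ref{thmhodgeformality} with the sentence \enquote{let us state separately the case~$r=\infty$ of Theorem~\ref{thmhodgepartialformality}}, so the intended proof is precisely the specialization $r=\infty$, with the conventions on $\infty$-quasi-isomorphisms making the hypotheses and conclusion coincide. Your optional unwinding of the argument via the Deligne--Morgan model $M^\bullet$ and the quotient $C^\bullet$ (resp.\ sub-cdga $K^\bullet$) also reproduces the paper's proof of the partial statement faithfully.
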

		
		\begin{rem}
		Theorem~\ref{thmhodgeformality} (1) applies for~$U=X$ a smooth compact variety, in which case we recover the main result of~\cite{delignegriffithsmorgansullivan}.
		\end{rem}
		
		\begin{rem}\label{remtype}
		In the context of Theorem~\ref{thmhodgepartialformality} (2) and Theorem~\ref{thmhodgeformality} (2), the proof shows that the mixed Hodge structure on~$H^k(U)$,~$k\leq r$, can only be of type~$(k,k)$. Indeed,~$H^k(U)$ is a Hodge sub-structure of~$M_{2k}^k=\bigoplus_{|I|=k}H^0(D_I)(-k)$, which has type~$(k,k)$.
		\end{rem}	
		
		\begin{rem}
		The case~$r=1$ of Theorem~\ref{thmhodgepartialformality} (2) states that if~$H^1(U)$ is pure of weight~$2$ then~$U$ is~$1$-formal. This fact is well-known, being a consequence of~\cite[Corollary 10.3]{morganalgebraictopology}; indeed, a topological space is~$1$-formal if and only if the Malcev Lie algebra of its fundamental group is the completion of its holonomy Lie algebra.
		\end{rem}
		
		We mention an application of Theorem~\ref{thmhodgeformality}.
		
		\begin{prop}
		Let~$X$ be a smooth compact variety and~$p\in X$ be a point. Then the complement~$X-p$ is formal.
		\end{prop}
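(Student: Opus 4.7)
The plan is to apply Theorem~\ref{thmhodgeformality}~(1) to $U = X - p$, so the task reduces to showing that $H^k(X-p)$ is pure of weight $k$ for every integer $k$. Set $n = \dim_{\C} X$; the case $n = 0$ is vacuous, so I may assume $n \geq 1$.

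The main tool will be the long exact sequence in cohomology of the pair $(X, X-p)$, or equivalently the Gysin long exact sequence for the closed embedding $\{p\} \hookrightarrow X$. By purity in mixed Hodge theory, the local cohomology $H^k_{\{p\}}(X)$ is isomorphic, as a mixed Hodge structure, to $H^{k-2n}(\{p\})(-n)$, i.e.\ equal to $\Q(-n)$ in degree $k = 2n$ and zero otherwise. Plugging this into the long exact sequence will yield isomorphisms $H^k(X) \xrightarrow{\sim} H^k(X-p)$ for $k \leq 2n-2$, together with the four-term exact sequence
$$0 \to H^{2n-1}(X) \to H^{2n-1}(X-p) \to \Q(-n) \to H^{2n}(X) \to H^{2n}(X-p) \to 0.$$

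The one technical point, which I expect to be routine rather than a real obstacle, is to identify the map $\Q(-n) \to H^{2n}(X)$ as an isomorphism on the connected component of $X$ containing $p$. This will follow from the fact that $X-p$ is a non-compact connected $2n$-dimensional real manifold on that component, so $H^{2n}(X-p) = 0$ there (the other components of $X$ being unaffected by the puncture); combined with the one-dimensionality of $H^{2n}$ on a connected closed orientable manifold, this forces the map to be surjective and hence an isomorphism. I then conclude that $H^{2n-1}(X-p) \cong H^{2n-1}(X)$ and $H^k(X-p) = 0$ for $k \geq 2n$.

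Putting everything together, $H^k(X-p) \cong H^k(X)$ for $k \leq 2n-1$ and $H^k(X-p) = 0$ otherwise. Since $X$ is smooth and compact, $H^k(X)$ is pure of weight $k$ for every $k$, hence so is $H^k(X-p)$. Theorem~\ref{thmhodgeformality}~(1) then delivers the formality of $X - p$.
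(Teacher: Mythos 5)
Your proof is correct and takes essentially the same route as the paper: the localization (Gysin) long exact sequence for $\{p\}\hookrightarrow X$, the resulting identifications $H^k(X-p)\cong H^k(X)$ for $k\leq 2n-1$ and vanishing above, purity of the cohomology of the smooth compact $X$, and then Theorem~\ref{thmhodgeformality}~(1). The only cosmetic difference is that the paper identifies the connecting map $H^0(p)(-n)\to H^{2n}(X)$ as an isomorphism by noting it is Poincar\'{e} dual to the restriction $H^0(X)\to H^0(p)$, whereas you deduce its surjectivity from the vanishing of the top cohomology of the punctured (non-compact) component; both arguments are fine.
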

		
		\begin{proof}
		Let us write~$d$ for the complex dimension of~$X$. The long exact localization sequence 
		$$\cdots\rightarrow  H^{k-2d}(p)(-d)\rightarrow H^k(X) \rightarrow H^k(X-p) \rightarrow H^{k-2d+1}(p)(-d) \rightarrow  \cdots$$
		gives isomorphisms~$H^k(X)\cong H^k(X-p)$ for~$k\leq 2d-2$, and an exact sequence
		$$0\rightarrow H^{2d-1}(X)\rightarrow H^{2d-1}(X-p)\rightarrow H^0(p)(-d)\rightarrow H^{2d}(X)\rightarrow H^{2d}(X-p)\rightarrow 0.$$
		By definition, the morphism~$H^0(p)(-d)\rightarrow H^{2d}(X)$ is the Gysin morphism of the inclusion~$p\hookrightarrow X$, i.e. is Poincar\'{e} dual to the morphism~$H^0(X)\rightarrow H^0(p)$, which is an isomorphism. We then have~$H^{2d-1}(X)\cong H^{2d-1}(X-p)$ and~$H^{2d}(X-p)=0$. Thus, for every integer~$k$,~$H^k(X-p)$ is pure of weight~$k$ and the claim follows from Theorem~\ref{thmhodgeformality} (2).
		\end{proof}

\section{Formality of arrangement complements}

	\subsection{Hodge theory of arrangement complements}
	
		Let~$X$ be a smooth (not necessarily compact) variety. A finite set~$\A=\{L_1,\ldots,L_l\}$ of smooth hypersurfaces of~$X$ is a \textit{hypersurface arrangement}~\cite{duponthypersurface} if around each point of~$X$ we may find a system of local coordinates in which each~$L_i$ is defined by a linear equation. The notion of a hypersurface arrangement generalizes that of a simple normal crossing divisor, for which the local linear equations are everywhere linearly independent. We denote by 
		$$U(\A)=X - L_1\cup\cdots\cup L_l$$
		the complement of the union of the hypersurfaces in~$\A$.\\
		
		A \emph{stratum} of a hypersurface arrangement~$\A$ is a non-empty connected component of an intersection~$\bigcap_{i\in I}L_i$ for~$I\subset\{1,\ldots,l\}$. Each stratum is a smooth subvariety of~$X$. We write~$\s_r(\A)$ for the set of strata of~$L$ of codimension~$r$. By convention,~$\s_0(\A)$ only contains the ambient variety~$X$.\\
		
		To a stratum~$S$ of~$\A$ one canonically attaches a finite-dimensional rational vector space~$A_S(\A)$ in the following way. Choose any point~$p\in S$ and any local chart on~$X$ around~$p$ in which all the hypersurfaces~$T_i$ containing~$S$ are defined by linear equations. Then in this local chart~$\A$ is a hyperplane arrangement, and one defines~$A_S(\A)$ to be the~$S$-local component of the Orlik-Solomon algebra of this hyperplane arrangement, see~\cite[2.2]{looijenga} or~\cite[2.4]{duponthypersurface} for more details.
			
		\begin{thm}
		The Leray spectral sequence of the inclusion~$U(\A)\hookrightarrow X$ can be computed as
		\begin{equation}\label{eqleray}
		E_2^{p,q}=\bigoplus_{S\in \s_q(\A)}H^p(S)\otimes A_S(\A)(-q) \; \Longrightarrow \; H^{p+q}(U(\A))
		\end{equation}
		and is a spectral sequence in the category of mixed Hodge structures.
		\end{thm}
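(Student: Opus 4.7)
The plan is to identify the higher direct image sheaves $R^q j_*\Q$ of the open immersion $j\colon U(\A)\hookrightarrow X$ and then feed the result into the Leray spectral sequence. Concretely, I aim to establish a sheaf-level identification
$$R^q j_*\Q_{U(\A)} \;\cong\; \bigoplus_{S\in\s_q(\A)} (i_S)_*\,\underline{A_S(\A)}_S\,(-q),$$
where $i_S\colon S\hookrightarrow X$ is the inclusion of a codimension-$q$ stratum and $\underline{V}_S$ denotes the constant sheaf on $S$ with fibre $V$. Taking $H^p(X,-)$ of both sides then immediately yields $E_2^{p,q}=\bigoplus_{S\in\s_q(\A)}H^p(S)\otimes A_S(\A)(-q)$, which is the stated formula.

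To establish the sheaf isomorphism, I would argue locally. Around any point $p\in X$, fix a linearizing chart as provided by the definition of a hypersurface arrangement, in which the hypersurfaces of $\A$ passing through $p$ form a central hyperplane arrangement $\A'$ inside a coordinate subspace $\C^c$. In such a chart $U(\A)$ looks like $\C^{n-c}\times M(\A')$, so by homotopy invariance the stalk $(R^q j_*\Q)_p$ is canonically identified with $H^q(M(\A'))$. Brieskorn's classical decomposition of the cohomology of a hyperplane arrangement complement splits this stalk as a direct sum of local Orlik-Solomon components indexed by the codimension-$q$ flats of $\A'$, which are in bijection with the codimension-$q$ strata of $\A$ through $p$; the corresponding summand is exactly $A_S(\A)$ in the sense of~\cite[2.2]{looijenga} and~\cite[2.4]{duponthypersurface}. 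A standard local triviality argument, using that each stratum $S$ admits a tubular neighbourhood on which the arrangement is transversally the constant hyperplane arrangement defining $A_S(\A)$, then promotes this stalkwise identification to the required isomorphism of constructible sheaves.

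For the Hodge-theoretic statement, I would invoke either Saito's theory of mixed Hodge modules applied to $Rj_*\Q_{U(\A)}$, or the classical fact that the Leray spectral sequence of an open immersion between smooth varieties is a spectral sequence in the category of mixed Hodge structures. The Tate twist $(-q)$ records that $H^q$ of a hyperplane arrangement complement is pure of type $(q,q)$, being generated in degree one by classes of the form $H^1(\C^*)$. The principal difficulty is to package the local Brieskorn decompositions as a globally defined morphism of mixed Hodge sheaves, rather than merely as stalk-level isomorphisms; this compatibility is essentially what is worked out in the references above, and modulo it the computation of the $E_2$ page is just a direct application of Leray.
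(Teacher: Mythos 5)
Your proposal is correct and follows essentially the same route as the proof the paper relies on: the paper itself gives no argument for this theorem but defers entirely to the cited references (Bibby, Looijenga, and the author's earlier paper on hypersurface arrangements), all of which compute $R^qj_*\Q$ stratum by stratum via local Brieskorn decompositions exactly as you outline, and obtain the Hodge-theoretic statement from Saito's formalism of mixed Hodge modules. The step you rightly single out as the principal difficulty --- promoting the stalkwise Brieskorn splittings to a global direct-sum decomposition of sheaves, which requires in particular that the local components $A_S(\A)$ glue to a \emph{constant} (monodromy-free) sheaf along each closed stratum $S$ --- is precisely the content worked out in those references, so your sketch is faithful to the actual proof.
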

		
		The proof of this theorem can be found in~\cite[3]{bibbyabelian}; the same spectral sequence, shifted, appeared in~\cite[2.2]{looijenga} and~\cite[4.3]{duponthypersurface}. The fact that the Leray spectral sequence of an algebraic map is compatible with mixed Hodge structures is a consequence of Saito's formalism of mixed Hodge modules~\cite{saitomodulesdehodgepolarisables}, see~\cite[Corollary 14.14]{peterssteenbrink}.
		
		\begin{rem}
		If~$\A$ is a normal crossing divisor, then every vector space~$A_S(\A)$ is one-dimensional. If furthermore~$X$ is a smooth projective variety, then the~$E_2$ page~(\ref{eqleray}) is the Deligne-Morgan model of~$U(\A)$, see Remark~\ref{remleray}.
		\end{rem}
	
	\subsection{Formality of arrangement complements}
	
		We fix, as in the previous paragraph, a smooth (not necessarily compact) variety~$X$ and a hypersurface arrangement~$\A$ inside~$X$.
	
		\begin{thm}\label{thmarrangementsformalpartial}
		Let~$r\geq 0$ be an integer, or~$r=\infty$. Assume that for every stratum~$S$ of~$\A$ and for every integer~$k$ such that~$\mathrm{codim}(S)+k\leq r$,~$H^k(S)$ is pure of weight~$2k$. Then the following holds:
		\begin{enumerate}
		\item for every integer~$k\leq r$,~$H^k(U(\A))$ is pure of weight~$2k$;
		\item~$U(\A)$ is~$r$-formal.
		\end{enumerate}
		\end{thm}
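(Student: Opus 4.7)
My plan is to prove the two items sequentially, first establishing the purity statement (1) and then deducing the formality statement (2) as an immediate consequence of Theorem~\ref{thmhodgepartialformality} (2). The engine throughout is the Leray spectral sequence~(\ref{eqleray}), whose compatibility with mixed Hodge structures does all the real work.

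For (1), I would analyze the weights on the~$E_2$ page. Each Orlik-Solomon component~$A_S(\A)$ is a rational vector space, hence carries a trivial Hodge structure of weight~$0$; the Tate twist~$(-q)$ then makes~$A_S(\A)(-q)$ pure of weight~$2q$. Now fix integers~$p,q$ with~$p+q\leq r$ and a stratum~$S\in\s_q(\A)$, so that~$\mathrm{codim}(S)=q$ and~$\mathrm{codim}(S)+p\leq r$. The hypothesis gives that~$H^p(S)$ is pure of weight~$2p$, so the summand~$H^p(S)\otimes A_S(\A)(-q)$ is pure of weight~$2(p+q)$. Summing over~$S\in\s_q(\A)$, one obtains that~$E_2^{p,q}$ is pure of weight~$2(p+q)$ whenever~$p+q\leq r$. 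Since all differentials on later pages are morphisms of mixed Hodge structures, the abutment~$E_\infty^{p,q}$ is a subquotient of~$E_2^{p,q}$ in the category of mixed Hodge structures and is therefore pure of weight~$2(p+q)$ in the same range. Finally, for~$k\leq r$ the Leray filtration on~$H^k(U(\A))$ has successive quotients~$E_\infty^{p,q}$ with~$p+q=k$, all pure of weight~$2k$; since an iterated extension of pure Hodge structures of a single weight is pure of that weight,~$H^k(U(\A))$ is pure of weight~$2k$, establishing~(1).

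For~(2), I would simply invoke Theorem~\ref{thmhodgepartialformality}~(2) applied to~$U=U(\A)$: part~(1) furnishes exactly the purity hypothesis required there, so~$U(\A)$ is~$r$-formal. No further input is needed.

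The only step requiring a genuine check is the weight computation on the~$E_2$ page, and in particular the fact that~$A_S(\A)$ carries the trivial (weight zero) Hodge structure, so that the combinatorial factor contributes only through the Tate twist. Everything else is formal manipulation of pure pieces inside a spectral sequence of mixed Hodge structures, together with the recycled purity-implies-formality machinery of Section~2.
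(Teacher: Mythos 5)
Your proposal is correct and follows essentially the same route as the paper: purity of the $E_2$ page from the hypothesis and the Tate twist, purity of the $E_\infty$ subquotients via compatibility of the spectral sequence with mixed Hodge structures, purity of $H^k(U(\A))$ as an iterated extension of pure pieces of a single weight, and then Theorem~\ref{thmhodgepartialformality}~(2). The only difference is that you spell out the weight computation on the $E_2$ page, which the paper leaves implicit.
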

		
		\begin{proof}
		By assumption, the term~$E_2^{p,q}$ in the Leray spectral sequence (\ref{eqleray}) is pure of weight~$2(p+q)$ for~$p+q\leq r$. Let~$L$ be the (decreasing) Leray filtration on the cohomology of~$U(\A)$, abutment of the Leray spectral sequence. By definition the term~$E_\infty^{p,q}=\mathrm{gr}_L^pH^{p+q}(U(\A))$ is a subquotient of~$E_2^{p,q}$. Since the spectral sequence is compatible with mixed Hodge structures,~$L$ is a filtration by mixed Hodge sub-structures, and each graded quotient~$\mathrm{gr}^p_LH^k(U(\A))$ is pure of weight~$2k$, for~$k\leq r$. Thus, the whole cohomology group~$H^k(U(\A))$ is pure of weight~$2k$, for~$k\leq r$. Theorem~\ref{thmhodgepartialformality} (2) then implies the formality statement.
		\end{proof}
	
		For the sake of clarity, let us state separately the case~$r=\infty$ of Theorem~\ref{thmarrangementsformalpartial}.
	
		\begin{thm}\label{thmarrangementsformal}
		Assume that for every stratum~$S$ of~$\A$ and for every integer~$k$,~$H^k(S)$ is pure of weight~$2k$. Then the following holds:
		\begin{enumerate}
		\item for every integer~$k$,~$H^k(U(\A))$ is pure of weight~$2k$;
		\item~$U(\A)$ is formal.
		\end{enumerate}
		\end{thm}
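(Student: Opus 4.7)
The plan is to derive this theorem directly from Theorem~\ref{thmarrangementsformalpartial} by specializing to $r=\infty$. First I would observe that when $r=\infty$, the constraint $\mathrm{codim}(S)+k\leq r$ is vacuously satisfied for every stratum $S$ and every integer $k$, so the hypothesis of Theorem~\ref{thmarrangementsformalpartial} collapses exactly onto the hypothesis assumed here, namely that $H^k(S)$ is pure of weight $2k$ for every stratum $S$ of $\A$ and every integer $k$.

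With this in hand, conclusion (1) is immediate: Theorem~\ref{thmarrangementsformalpartial}(1) says that $H^k(U(\A))$ is pure of weight $2k$ for all $k\leq r=\infty$, which is precisely the desired statement for all integers $k$. Similarly, conclusion (2) is obtained from Theorem~\ref{thmarrangementsformalpartial}(2), which yields $\infty$-formality of $U(\A)$; by the convention recalled in Section~2.2 of the excerpt, an $\infty$-formal space is by definition a formal space.

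There is no genuine obstacle here: the statement is a strict specialization of the preceding one, isolated only for the reader's convenience, exactly in parallel with the way Theorem~\ref{thmhodgeformality} is extracted from Theorem~\ref{thmhodgepartialformality}. The substantive technical content — namely the computation of the Leray spectral sequence~(\ref{eqleray}) together with the observation that purity is inherited by subquotients and extensions in the category of mixed Hodge structures, followed by the appeal to Theorem~\ref{thmhodgepartialformality}(2) — has already been carried out in full generality in the proof of Theorem~\ref{thmarrangementsformalpartial}, so no additional argument is required beyond pointing out the specialization.
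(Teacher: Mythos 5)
Your proposal is correct and matches the paper exactly: the paper states this theorem as the case $r=\infty$ of Theorem~\ref{thmarrangementsformalpartial} and gives no further proof, just as you argue. Nothing is missing.
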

		
		\begin{rem}
		In the context of Theorem~\ref{thmarrangementsformalpartial} and~\ref{thmarrangementsformal}, Remark~\ref{remtype} implies that the mixed Hodge structure on~$H^k(U(\A))$,~$k\leq r$, can only be of type~$(k,k)$.
		\end{rem}
		
		\begin{rem}
		We may also note (although this is not necessary for our discussion) that the Leray spectral sequence (\ref{eqleray}) degenerates at~$E_2$, i.e.~$E_\infty^{p,q}=E_2^{p,q}$. Indeed, any differential~$d_r:E_r^{p,q}\rightarrow E_r^{p+r,q-r+1}$ has a pure Hodge structure of weight~$2(p+q)$ as its source and a pure Hodge structure of weight~$2(p+q+1)$ as its target; it must thus be zero.
		\end{rem}

	\subsection{Application to hyperplane arrangements}\label{parhyperplane}
	
		A \textit{hyperplane arrangement} in~$\C^n$ is a finite set 
		$$\A=\{L_1,\ldots,L_l\}$$ 
		where each~$L_i$ is a hyperplane in~$\C^n$. The \textit{complement} of the hyperplane arrangement~$\A$ is the smooth variety
		$$U(\A)=\C^n - L_1\cup\cdots \cup L_l.$$
	
		\begin{thm}\label{thmformalhyp}
		Let~$\A$ be a hyperplane arrangement. Then the following holds:
		\begin{enumerate}
		\item for every integer~$k$,~$H^k(U(\A))$ is pure of weight~$2k$;
		\item~$U(\A)$ is formal.
		\end{enumerate}
		\end{thm}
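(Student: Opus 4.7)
The plan is to view the hyperplane arrangement $\A$ as a hypersurface arrangement inside the smooth variety $X=\C^n$ and to apply Theorem~\ref{thmarrangementsformal}. The local coordinate condition required of a hypersurface arrangement is automatic here, since each $L_i$ is globally defined by a linear equation on~$\C^n$.

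The main step is to verify the purity hypothesis on the strata. By the definition of a stratum, every $S\in\s_r(\A)$ is a (non-empty) connected component of some intersection $\bigcap_{i\in I}L_i$ of hyperplanes of~$\C^n$. Such an intersection is itself an affine subspace of $\C^n$, hence already connected, and isomorphic to $\C^m$ with $m=n-r$. (The case $r=0$ corresponds to $S=X=\C^n$.) In particular every stratum is isomorphic to an affine space.

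Next I would compute the mixed Hodge structure on the cohomology of each stratum. Since $S\cong\C^m$ is contractible, one has $H^0(S)=\Q$ and $H^k(S)=0$ for $k\geq 1$. The group $H^0(S)=\Q$ is pure of weight~$0=2\cdot 0$, and the vanishing of $H^k(S)$ for $k\geq 1$ makes the purity statement trivially true in those degrees. Hence for every stratum $S$ and every integer $k$, $H^k(S)$ is pure of weight~$2k$, which is exactly the hypothesis of Theorem~\ref{thmarrangementsformal}.

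Applying Theorem~\ref{thmarrangementsformal} to $\A\subset\C^n$ then yields both conclusions at once: for every integer~$k$, $H^k(U(\A))$ is pure of weight~$2k$, and $U(\A)$ is formal. There is no real obstacle here: the entire content has been packaged into Theorem~\ref{thmarrangementsformal}, and the only thing to check is the (elementary) statement that affine subspaces of~$\C^n$ have cohomology concentrated in degree zero.
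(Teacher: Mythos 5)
Your proposal is correct and follows essentially the same route as the paper: treat $\A$ as a hypersurface arrangement in $\C^n$, observe that all strata are affine spaces whose cohomology is $\Q(0)$ concentrated in degree zero (hence trivially pure of weight $2k$ in every degree $k$), and apply Theorem~\ref{thmarrangementsformal}. The only difference is that you spell out the (correct) observation that the intersections are already connected, which the paper leaves implicit.
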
		
		
		\begin{proof}
		A hyperplane arrangement~$\A$ in~$\C^n$ is trivially a hypersurface arrangement; furthermore, all strata are isomorphic to affine spaces~$\C^d$, hence their only non-zero cohomology groups are~$H^0(\C^d)\cong \Q(0)$, pure of weight~$0$. The claim then follows from Theorem~\ref{thmarrangementsformal}. 
		\end{proof}
	
		Theorem~\ref{thmformalhyp} is classical, and is often proved in a much simpler way, without needing the Deligne-Morgan model. Let~$R^\bullet(\A)$ be the subalgebra of the algebra of holomorphic differential forms on~$U(\A)$ generated by the logarithmic forms~$\frac{df}{f}$, for~$f$ the equation of a hyperplane in~$\A$. All the differential forms in~$R^\bullet(\A)$ are closed. Then~\cite[Lemme 5]{brieskorn} implies that the inclusion of~$R^\bullet(\A)$ inside the complex of differential forms on~$U(\A)$ is a quasi-isomorphism, which implies the formality of~$U(\A)$. The purity statement follows easily from the same argument, and first appeared in~\cite{lehrerladic,shapiropure,kimweights}. 
	
	\subsection{Application to toric arrangements}
	
		A \emph{toric arrangement} in~$(\mathbb{C}^*)^n$ is a finite set 
		$$\A=\{L_1,\ldots,L_l\}$$
		where each~$L_i$ is a translated codimension~$1$ subtori of~$(\mathbb{C}^*)^n$, i.e. defined by an equation of the form~$\{z_1^{k_1}\cdots z_n^{k_n}=a\}$ with~$k_1,\ldots,k_n$ integers and~$a$ a non-zero complex number.
	
		The \emph{complement} of the toric arrangement~$\A$ is the smooth variety 
		$$U(\A)=(\mathbb{C}^*)^n - L_1\cup\cdots\cup L_l.$$
		
		\begin{thm}\label{thmtoricformal}
		Let~$\A$ be a toric arrangement. Then the following holds:
		\begin{enumerate}
		\item for every integer~$k$,~$H^k(U(\A))$ is pure of weight~$2k$;
		\item~$U(\A)$ is formal.
		\end{enumerate}
		\end{thm}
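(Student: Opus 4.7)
My plan is to reduce the statement to Theorem~\ref{thmarrangementsformal}, which yields both assertions simultaneously provided we can verify that for every stratum~$S$ of~$\A$ and every integer~$k$ the cohomology group~$H^k(S)$ is pure of weight~$2k$. All of the work therefore goes into describing the strata of a toric arrangement and identifying their cohomology with weights.

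Before applying the arrangement framework, I would first check that a toric arrangement is a hypersurface arrangement in the sense of Section~2.1: on a small neighbourhood of any point of~$(\C^*)^n$ one may choose holomorphic branches of the logarithm of each coordinate, after which every defining equation~$z_1^{k_1}\cdots z_n^{k_n}=a$ becomes a linear equation in these local analytic coordinates. Thus~$\A$ fits into the framework of Theorem~\ref{thmarrangementsformal}.

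The main step is the structural observation that each stratum~$S$ of~$\A$ is isomorphic, as an algebraic variety, to a torus~$(\C^*)^d$ for some~$d\geq 0$. I would argue as follows. Writing~$L_i=\{z^{a_i}=c_i\}$ with~$a_i\in\mathbb{Z}^n$, an intersection~$\bigcap_{i\in I}L_i$ is a coset of the closed algebraic subgroup~$H_I=\bigcap_{i\in I}\{z^{a_i}=1\}$ of~$(\C^*)^n$. The identity component~$H_I^0$ is a subtorus isomorphic to~$(\C^*)^{n-r}$, where~$r$ is the rank of the sublattice of~$\mathbb{Z}^n$ generated by the~$a_i$ for~$i\in I$, and~$H_I$ is a finite disjoint union of translates of~$H_I^0$. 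Each connected component of~$\bigcap_{i\in I}L_i$, that is each stratum, is one such translate, hence is isomorphic to~$(\C^*)^d$ for some~$d$.

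The last ingredient is cohomological: since~$H^1(\C^*)\cong\Q(-1)$ is pure of weight~$2$, the Künneth formula (compatible with mixed Hodge structures) identifies~$H^k((\C^*)^d)$ with the~$k$-th exterior power of~$\Q(-1)^{\oplus d}$, which is pure of weight~$2k$. This verifies the hypothesis of Theorem~\ref{thmarrangementsformal} and both conclusions follow. I do not anticipate a serious obstacle; the only potentially delicate point is the identification of the connected components of~$H_I$ with translates of a subtorus, which is a standard consequence of the structure theory of closed subgroups of algebraic tori.
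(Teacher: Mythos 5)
Your proposal is correct and follows essentially the same route as the paper: verify that a toric arrangement is a hypersurface arrangement via local logarithms (the paper uses the exponential cover), identify every stratum with a torus $(\C^*)^d$, deduce purity of weight $2k$ on $H^k((\C^*)^d)$ from $H^1(\C^*)\cong\Q(-1)$ and K\"unneth, and apply Theorem~\ref{thmarrangementsformal}. The only difference is that you spell out the structure theory of cosets of closed subgroups of $(\C^*)^n$, which the paper leaves implicit.
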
		
		
		\begin{proof}
		By using the exponential cover~$\exp:\C^n\rightarrow(\C^*)^n$, one sees that a toric arrangement~$\A$ in~$(\C^*)^n$ is a hypersurface arrangement. All strata are isomorphic to tori~$(\C^*)^d$; since~$H^1(\C^*)$ is pure of weight~$2$, one derives from the K\"{u}nneth formula that~$H^k((\C^*)^d)$ is pure of weight~$2k$ for all integers~$k,d$. The claim then follows from Theorem~\ref{thmarrangementsformal}. 
		\end{proof}
		
		We note that part (1) of Theorem~\ref{thmtoricformal} was already proved, in a very similar way, by Looijenga~\cite[2.2]{looijenga}.
		
		In~\cite{deconciniprocesitoric}, De Concini and Procesi proved the formality of the complement of \emph{unimodular} toric arrangements. These are the toric arrangements~$\A=\{L_1,\ldots,L_l\}$ for which all the intersections~$\bigcap_{i\in I}L_i$, for~$I\subset\{1,\ldots,l\}$, are connected. Their proof uses differential forms and goes along the same lines as the one for hyperplane arrangements that we have described in \S\ref{parhyperplane}. This was later generalized by Deshpande and Sutar~\cite{deshpandesutar} to a broader class of toric arrangements, called \emph{deletion-restriction type}. It is natural to ask if these proofs can be extended to more general toric arrangements. We hope to address this question in a future article.

\bibliographystyle{alpha}
\bibliography{biblio}

\end{document}